\theoremstyle{theorem}
\newtheorem{theorem}{Theorem}[section]
\newtheorem{lemma}[theorem]{Lemma}
\theoremstyle{definition}
\newtheorem{remark}[theorem]{Remark}
\newcommand{\be}{\begin{equation}}
\newcommand{\ee}{\end{equation}}
\newcommand{\bes}{\begin{equation*}}
\newcommand{\ees}{\end{equation*}}
\newcommand{\cH}{\mathcal{H}}
\newcommand{\cK}{\mathcal{K}}
\newcommand{\cB}{\mathcal{B}}
\newcommand{\cA}{\mathcal{A}}
\newcommand{\ad}{\operatorname{ad}}
\newcommand{\alg}{\operatorname{Alg}}
\begin{document}

\allowdisplaybreaks

\title{Unitary dilation of freely independent contractions}
\author{Scott Atkinson and Christopher Ramsey}
\address{University of Virginia, Charlottesville, VA, USA}
\email{saa6uy@virginia.edu and cir6d@virginia.edu}

\begin{abstract}
Inspired by the Sz.-Nagy-Foias dilation theorem we show that  $n$ freely independent contractions dilate to $n$ freely independent unitaries. 
\end{abstract}

\thanks{2010 {\it  Mathematics Subject Classification.}
47A20, 46L54, 46L09}
\thanks{{\it Key words and phrases:} Dilation, non-commutative probability, tensor independence, free independence, free product}

\maketitle

%%%%%%%%%%%%%%%%%%%%
%%%%%%%%%%%%%%%%%%%%
\section{Introduction} 

The Sz.-Nagy-Foias dilation theorem is a celebrated result in classical dilation theory.  It says that $n$ doubly commuting contractions  can be simultaneously dilated to $n$ doubly commuting unitaries. This was the original multivariable dilation theory context proven by Brehmer and Sz.-Nagy \cite{Brehmer, Sz.-Nagy, Sz.-NagyFoias} until And\^o \cite{Ando} proved that one can do this for just commuting and not doubly commuting contractions when $n=2$. However, it was subsequently shown in \cite{Parrott} and \cite{Varopoulos} that there are three commuting contractions which do not dilate to three commuting unitaries. This obstruction spurred on dilation theories in other contexts \cite{Arveson, Bunce, Drury, Frazho, Popescu} and many other generalizations. One recent usage of dilations of doubly commuting contractions is the dilation of Nica covariant representations of lattice-ordered semigroups \cite{Fuller, Li}.

Doubly commuting is one of two ingredients in the notion of tensor independence (or classical independence). It is natural then to ask whether $n$ tensor independent contractions can be dilated to $n$ tensor independent unitaries. The answer is yes (Theorem \ref{tensorind}) and begs the question whether this can be done with other notions of non-commutative probability, namely free probability. 

Stemming from the notion of reduced free product \cite{Avitzour, Voiculescu} Voiculescu developed the theory of free probability in the 1980's with the goal of solving the free group factor problem. While this still remains unsolved, free probability has become a very important field of mathematical research. For further reading see \cite{HiaiPetz, NicaSpeicher}.

This paper culminates in Theorem \ref{freedilation}, that $n$ freely independent contractions do indeed dilate to $n$ freely independent unitaries. In a dilation theory context this has been done by Boca in \cite{Boca} where he gives the most general unitary dilation of $n$ contractions. The only free probability dilation result we know of is the unitary dilation of L-free sets of contractions of Popa and Vaes \cite{PopaVaes}. 

\subsection*{Acknowledgements} We would like to thank  David Sherman and Stuart White for some very helpful discussions during White's visit to the University of Virginia sponsored by the Institute of Mathematical Science.

%%%%%%%%%%%%%%%%%%%%
%%%%%%%%%%%%%%%%%%%%
\section{Dilation theory of tensor independence}

We first turn to the classical setting for inspiration. There are many great proofs of the Sz.-Nagy-Foias dilation theorem and the following constructive method is probably quite old but the authors have only seen it written down in \cite[Example 2.5.13]{DFK}. Recall that two operators $S,T\in B(\cH)$ doubly commute if $ST = TS$ and $S^*T= TS^*$. This is equivalent to requiring that $C^*(1,S)$ and $C^*(1,T)$ commute. Note that this does not require that $S$ and $T$ are normal.

\begin{theorem}[Sz.-Nagy-Foias]\label{sznagyfoias}
Let $T_1,\dots, T_n$ be doubly commuting contractions in $B(\cH)$.  Then there exists a Hilbert space $\cK$ containing $\cH$ and doubly commuting unitaries $U_1,\dots,U_n \in B(\cK)$ such that 
\[
T_1(k_1)\cdots T_n(k_n) = P_\cH U_1^{k_1}\cdots U_n^{k_n}|_\cH, \ \ \textrm{where} \ \ T(k) = \left\{\begin{array}{ll} T^k, & k\geq 0 \\ T^{*-k}, & k < 0\end{array}\right. .
\]
Furthermore, this dilation is unique up to unitary equivalence when $\cK$ is minimal, meaning that it is the smallest reducing subspace of $U_1,\dots, U_n$ containing $\cH$.
\end{theorem}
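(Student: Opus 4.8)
The plan is to argue by induction on $n$, dilating one contraction at a time and transporting the others. The case $n=1$ is the classical single-variable Sz.-Nagy unitary dilation; I recall its Sch\"affer-type model, since its shape drives the induction. For a contraction $T$ on $\cH$ the dilation space is an orthogonal direct sum of $\cH$ with countably many copies of the defect spaces $\mathcal{D}_T=\overline{\operatorname{ran}\,(I-T^*T)^{1/2}}$ and $\mathcal{D}_{T^*}=\overline{\operatorname{ran}\,(I-TT^*)^{1/2}}$, and the dilating unitary is a block operator assembled solely from $T$, the defect operators $(I-T^*T)^{1/2}$ and $(I-TT^*)^{1/2}$, identities, and (bilateral) shifts on the defect summands.

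For the inductive step, peel off $T_n$ and let $U_n$ be its minimal unitary dilation on some $\cK_1\supseteq\cH$, modeled as above. The crucial observation is that if $T_j$ doubly commutes with $T_n$, then $T_j$ and $T_j^*$ commute with $I-T_n^*T_n$ and with $I-T_nT_n^*$, hence, by the continuous functional calculus, with $(I-T_n^*T_n)^{1/2}$ and $(I-T_nT_n^*)^{1/2}$; therefore $T_j$ and $T_j^*$ leave $\mathcal{D}_{T_n}$ and $\mathcal{D}_{T_n^*}$ invariant and act there as contractions. We may thus extend each $T_j$ ($j<n$) to $\widetilde T_j\in B(\cK_1)$ acting diagonally: as $T_j$ on the $\cH$-summand and as the appropriate restriction of $T_j$ on each defect summand. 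Inspecting the block forms then shows: each $\widetilde T_j$ is a contraction; $\cH$ is reducing for $\widetilde T_j$ with $\widetilde T_j|_\cH=T_j$; $\widetilde T_1,\dots,\widetilde T_{n-1}$ still doubly commute pairwise; and each $\widetilde T_j$ doubly commutes with $U_n$ (here one uses that the block form of $U_n$ involves only $T_n$, its defect operators, identities, and shifts). Apply the inductive hypothesis to $\widetilde T_1,\dots,\widetilde T_{n-1}$ on $\cK_1$, obtaining $\cK\supseteq\cK_1$ and doubly commuting unitaries $U_1,\dots,U_{n-1}$. Since the dilation furnished by the inductive hypothesis is built from $\cK_1$ by a composition of one-variable moves of the above kind, and $U_n$ doubly commutes with every contraction dilated along the way (by the same defect-operator argument applied at each move), $U_n$ extends diagonally to $\cK$; it stays unitary because a unitary restricted to a reducing subspace is unitary, and it doubly commutes with $U_1,\dots,U_{n-1}$. (One phrases the induction so that it explicitly transports such an auxiliary doubly commuting family; this is routine.)

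The compression formula then follows by telescoping through the tower $\cH=\cK_0\subseteq\cK_1\subseteq\cdots\subseteq\cK_n=\cK$: at the moment each $U_i$ is created it is the minimal unitary dilation of the then-current extension $\widetilde T_i$ relative to compression onto the preceding space $\cK'$, so $P_{\cK'}U_i^{k}|_{\cK'}=\widetilde T_i(k)$; thereafter $U_i$ is only extended diagonally, so every earlier space stays reducing for it; and each not-yet-dilated contraction leaves the earlier spaces, and ultimately $\cH$, invariant while restricting to its previous incarnation. Using commutativity of the $U_i$ to rewrite $U_1^{k_1}\cdots U_n^{k_n}$ so these facts apply, and inserting the intermediate projections one at a time, collapses $P_\cH U_1^{k_1}\cdots U_n^{k_n}|_\cH$ to $T_1(k_1)\cdots T_n(k_n)$. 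For uniqueness, minimality gives $\cK=\overline{\operatorname{span}}\{U_1^{k_1}\cdots U_n^{k_n}h:h\in\cH,\ k_i\in\mathbb{Z}\}$ (every word in the $U_i^{\pm 1}$ reorders to such a monomial), and for $h,g\in\cH$ one computes
\[
\lip U_1^{k_1}\cdots U_n^{k_n}h,\ U_1^{l_1}\cdots U_n^{l_n}g\rip=\lip U_1^{k_1-l_1}\cdots U_n^{k_n-l_n}h,\ g\rip=\lip T_1(k_1-l_1)\cdots T_n(k_n-l_n)h,\ g\rip,
\]
which depends only on the $T_i$; hence $W(U_1^{k_1}\cdots U_n^{k_n}h):=(U_1')^{k_1}\cdots(U_n')^{k_n}h$ extends to a unitary fixing $\cH$ pointwise and intertwining each $U_i$ with the corresponding $U_i'$.

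I expect the main obstacle to be the bookkeeping inside the inductive step: confirming that \emph{every} relevant double-commutation relation survives each one-variable dilation --- between the new unitary and every surviving operator, and among the contractions not yet dilated --- along with the contraction property and the invariance of $\cH$ that the telescoping requires. Each such verification, however, reduces to the single mechanism that double commutation with $T_i$ forces commutation with the defect operators $(I-T_i^*T_i)^{1/2}$ and $(I-T_iT_i^*)^{1/2}$, and hence invariance of the defect spaces; granting that, the remaining checks, though lengthy, are mechanical computations with block operators.
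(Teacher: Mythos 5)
Your proposal is correct and takes essentially the same route as the paper: dilate one contraction at a time by a Sch\"affer-type one-variable unitary dilation, using the fact that double commutation forces commutation with the defect operators $D_{T_i}, D_{T_i^*}\in C^*(1,T_i)$ to carry the remaining contractions (and the previously constructed unitaries) along, then obtain the joint power-dilation formula by telescoping and uniqueness by the standard intertwining argument. The only cosmetic difference is that the paper uses the non-minimal Sch\"affer form on $\ell_2(\mathbb Z)\otimes\cH$ and extends the other operators by ampliation $I\otimes T_j$, which sidesteps the defect-space restriction bookkeeping your minimal-dilation version requires.
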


\begin{proof}
As mentioned above, this can also be found in \cite[Example 2.5.13]{DFK}. Let $\cH_1 = \ell_2(\mathbb Z) \otimes \cH$ and set
\[
T_1^{(1)} = \left[\begin{array}{ccccccc} \ddots&\\ \ddots&0 \\ &I & 0 \\ &&D_{T_1^*} & T_1 \\ &&-T_1^* & D_{T_1} & 0 \\ &&&&I&0 \\ &&&&&\ddots&\ddots\end{array}\right] \ \ \textrm{and} \ \ T_j^{(1)} = I_{\ell_2(\mathbb Z)} \otimes T_j, \ 2\leq j\leq n.
\]
Note that $T_1^{(1)}$ is the classic Sch{\"a}ffer form of the Sz.-Nagy dilation of $T_1$ and so is a unitary \cite{Schaffer}. Since $T_1,\dots, T_n$ doubly commute then so do $T_1^{(1)}, \dots, T_{n}^{(1)}$, this is immediate after noticing that the defect operators $D_{T_1} = (I - T_1^*T_1)^{1/2}$ and $D_{T_1^*}$ are in $C^*(1,T_1)$.

In the second step, let $\cH_2 = \ell_2(\mathbb Z) \otimes \cH_1$, $T_2^{(2)}$ be the Sch{\"a}ffer-Sz.-Nagy dilation of $T_2^{(1)}$ and $T_j^{(2)} = I_{\ell_2(\mathbb Z)} \otimes T_j^{(1)}$ for $j\neq 2$. Then $T_1^{(2)}$ and $T_2^{(2)}$ are unitaries and $T_1^{(2)},\dots, T_n^{(2)}$ are doubly commuting.

Continuing in this way one arrives at the nth step with doubly commuting unitaries $T_1^{(n)},\dots, T_n^{(n)}\in B(\cH_n)$ that are easily seen to satisfy the joint power dilation condition. 
\vskip 6 pt

Uniqueness when the dilation is minimal follows in the same way as in the one variable setting. It is proven by way of the uniqueness of the minimal Stinespring representation. 
\end{proof}

This can be rephrased into a non-commutative probability context. Recall that a non-commutative C$^*$-probability space $(\cA, \varphi)$ is a C$^*$-algebra $\cA$ along with a state $\varphi\in S(\cA)$.
We say that the operators  $T_1,\dots, T_n \in \cA$ are tensor (or classically) independent in $(\cA, \varphi)$ (or with respect to $\varphi$) if $C^*(1,T_1),\dots, C^*(1,T_n)$ pairwise commute and given $a_i\in C^*(1,T_i)$ we have the following factorization
\[
\varphi\left(\prod_{i=1}^n a_i\right) = \prod_{i=1}^n \varphi(a_i).
\]

%%%%%%%%
\begin{theorem}\label{tensorind}
Let $T_1,\dots, T_n \in B(\cH)$ be tensor independent contractions in the non-commutative probability space $(B(\cH),\varphi)$. Then there exists a Hilbert space $\cK$ containing $\cH$ and unitaries $U_1,\dots, U_n\in B(\cK)$ that are tensor independent with respect to the state $\psi = \varphi\circ \ad_{P_{\cH}}$ such that 
\[
T_1(k_1)\cdots T_n(k_n) = P_\cH U_1^{k_1}\cdots U_n^{k_n}|_\cH, \ \ \textrm{where} \ \ T(k) = \left\{\begin{array}{ll} T^k, & k\geq 0 \\ T^{*-k}, & k < 0\end{array}\right. .
\]
Furthermore, this dilation is unique up to unitary equivalence when $\cK$ is minimal, meaning that it is the smallest reducing subspace of $U_1,\dots, U_n$.
\end{theorem}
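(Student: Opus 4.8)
The plan is to reuse the iterative Sch\"affer construction from the proof of Theorem~\ref{sznagyfoias} without change and then check that the resulting dilation respects the state. Applying that construction to $T_1,\dots,T_n$ (which are doubly commuting, since $\ca(1,T_i)$ and $\ca(1,T_j)$ commute for $i\neq j$, a part of tensor independence) produces a Hilbert space $\cK\supseteq\cH$ and doubly commuting unitaries $U_1,\dots,U_n\in B(\cK)$ satisfying
\[
T_1(k_1)\cdots T_n(k_n)=P_\cH\,U_1^{k_1}\cdots U_n^{k_n}|_\cH .
\]
Since each $U_i$ is normal and each pair doubly commutes, $\{1,U_i,U_i^*\}$ commutes with $\{1,U_j,U_j^*\}$ for $i\neq j$, so the abelian $\ca$-algebras $\ca(1,U_1),\dots,\ca(1,U_n)$ pairwise commute; this is the first half of tensor independence. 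Moreover $\psi=\varphi\circ\ad_{P_\cH}$ is a state on $B(\cK)$, being the composition of the state $\varphi$ with the unital completely positive map $X\mapsto P_\cH X|_\cH$.

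It remains to establish the factorization $\psi(a_1\cdots a_n)=\prod_{i=1}^n\psi(a_i)$ for $a_i\in\ca(1,U_i)$. I would first reduce to monomials: the Laurent polynomials in $U_i$ are norm-dense in $\ca(1,U_i)$, and for fixed $a_2,\dots,a_n$ both $x\mapsto\psi(xa_2\cdots a_n)$ and $x\mapsto\psi(x)\psi(a_2)\cdots\psi(a_n)$ are bounded linear functionals on $\ca(1,U_1)$; so it suffices that they agree on each power $U_1^{k_1}$, and treating the factors $a_1,\dots,a_n$ one at a time this way reduces the claim to
\[
\psi\bigl(U_1^{k_1}\cdots U_n^{k_n}\bigr)=\prod_{i=1}^n\psi\bigl(U_i^{k_i}\bigr),\qquad k_1,\dots,k_n\in\mb{Z}.
\]
For this, the joint power dilation identity gives $\psi(U_1^{k_1}\cdots U_n^{k_n})=\varphi\bigl(T_1(k_1)\cdots T_n(k_n)\bigr)$; since $T_i(k_i)\in\ca(1,T_i)$ and the $T_i$ are tensor independent with respect to $\varphi$, this equals $\prod_i\varphi(T_i(k_i))$; and taking all exponents but the $i$-th equal to $0$ in the joint power dilation identity gives $\varphi(T_i(k_i))=\psi(U_i^{k_i})$. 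Stringing these equalities together proves the displayed identity, hence the factorization, hence that $U_1,\dots,U_n$ are tensor independent with respect to $\psi$.

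For uniqueness when $\cK$ is minimal, I would invoke the uniqueness assertion of Theorem~\ref{sznagyfoias}: the minimal joint unitary power dilation of $T_1,\dots,T_n$ is unique up to a unitary equivalence fixing $\cH$, and any such equivalence automatically intertwines the compression states $\varphi\circ\ad_{P_\cH}$ on the two sides. I do not expect a genuine obstacle: the construction of $\cK$ and of the unitaries is identical to the classical case, and the essential content is Theorem~\ref{sznagyfoias}. The only step needing a little care is the monomial reduction, in particular the justification --- via boundedness of the relevant linear functionals and density of the Laurent polynomials --- that one may treat the factors one at a time.
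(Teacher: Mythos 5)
Your proof is correct, and it verifies the key point (tensor independence of the dilating unitaries with respect to $\psi$) by a genuinely different argument than the paper. The paper stays inside the iterated Sch\"affer construction: at the $k$-th stage it takes \emph{arbitrary} elements $a_i \in C^*(1,T_i^{(k)})$, observes that $P_\cH a_i|_\cH \in C^*(1,T_i)$ and that $P_\cH$ commutes with the $a_i$ coming from the not-yet-dilated (tensored) copies because $\cH$ reduces those algebras, and so pushes the factorization $\varphi(P_\cH \prod_i a_i P_\cH) = \prod_i \varphi(P_\cH a_i P_\cH)$ through each of the $n$ stages, concluding with $\psi = \varphi \circ \ad_{P_\cH}$ at the end. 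You instead ignore the internal structure of the construction entirely: you check the factorization only on monomials $U_1^{k_1}\cdots U_n^{k_n}$, where it follows immediately from the joint power dilation identity together with tensor independence of $T_1,\dots,T_n$, and then upgrade to general $a_i \in C^*(1,U_i)$ by the multilinear density argument (Laurent polynomials dense in $C^*(1,U_i)$ since $U_i$ is unitary, plus boundedness of the relevant functionals), which is sound as you describe it. What your route buys is robustness and generality: it shows that \emph{any} family of doubly commuting unitaries satisfying the joint power dilation identity is automatically tensor independent with respect to the compressed state, so it applies verbatim to the minimal dilation and does not depend on the Sch\"affer model or on the reducing-subspace bookkeeping; what the paper's route buys is that it handles arbitrary algebra elements directly with no approximation step and mirrors the stage-by-stage philosophy used later for the free case. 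Both proofs lean on Theorem \ref{sznagyfoias} for the existence of the doubly commuting unitary dilation and for the uniqueness statement, as you do.
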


\begin{proof}
All that needs to be shown is that the unitaries arising from Theorem \ref{sznagyfoias} are tensor independent with respect to $\psi$. To this end first assume that $\cK = \cH_n$ and $U_j  = T_j^{(n)}$ as in the proof of Theorem \ref{sznagyfoias}. 

Let $a_i \in C^*(1, T_i^{(1)}), 1\leq i\leq n$.  This implies that $P_\cH a_i|_\cH \in C^*(1,T_i)$ for $1\leq i\leq n$, and $P_\cH$ and $a_i$ commute for $2\leq i\leq n$ since $\cH$ is a reducing subspace for each $C^*(1,T_i^{(1)}), 2 \leq i \leq n$. Hence, 
\begin{align*}
\varphi\left(P_\cH\prod_{i=1}^n a_i P_\cH\right) &= \varphi\left(\prod_{i=1}^n(P_\cH a_iP_\cH)\right) 
\\ &= \prod_{i=1}^n \varphi(P_\cH a_i P_\cH).
\end{align*}
Thus, $T_1^{(1)}, \dots, T_n^{(1)}$ are tensor independent with respect to $\varphi\circ\ad_{P_\cH}$. Continuing in this fashion one gets that $T_1^{(n)}, \dots, T_n^{(n)}$ are tensor independent with respect to $\varphi\circ\ad_{P_\cH} \circ \ad_{P_{\cH_1}} \circ \cdots \circ \ad_{P_{\cH_{n-1}}} = \varphi\circ \ad_{P_\cH} = \psi$ where the last copy of $\cH$ is in $\cH_n$.

Uniqueness of this dilation is given by Theorem \ref{sznagyfoias}.  %Probably add a little here
\end{proof}

%%%%%%%%%%%%%%%%%%%%%%%
%%%%%%%%%%%%%%%%%%%%%%%
\section{Dilation theory of free independence}

In this section we will prove a theorem very similar to Theorem \ref{tensorind} in another non-commutative probability context. Recall that the operators $T_1,\dots, T_n \in \cA$ are freely independent (or $*$-free) in $(\cA, \varphi)$ if their C$^*$-algebras $C^*(1,T_1), \dots, C^*(1,T_n)$ are freely independent. That is, whenever $a_{j} \in C^*(1,T_{i_j})$ such that $\varphi(a_j) = 0$ for $1\leq i_j\leq n$ and $i_j \neq i_{j-1}$ for $1 < j\leq m$ then 
\[
\varphi(a_1a_2\cdots a_m) = 0.
\]

Another proof of the Sz.-Nagy-Foias Theorem (Theorem \ref{sznagyfoias} above) can be found in Paulsen \cite[Theorem 12.10]{Paulsen}. Here one gets ucp maps $\theta_i : C(\mathbb T) \rightarrow C^*(1,T_i)$ given by dilation theory. Now one can extend this to the ucp map $\theta_1\otimes\cdots\otimes \theta_n$ on $C(\mathbb T)\otimes_{\rm max} \cdots \otimes_{\rm max} C(\mathbb T) \simeq C(\mathbb T^n)$. By taking the Stinespring representation of $\theta$ one gets the desired doubly commuting unitaries that jointly dilate $T_1,\dots, T_n$.

This provides a roadmap for an attempt to prove the free analogue of Theorem \ref{tensorind}. Namely, by using the free product of ucp maps  and then taking the Stinespring representation of this map it will be shown that one gets unitaries that jointly dilate $T_1,\dots, T_n$. One then hopes that these unitaries will be $*$-free with respect to a natural state. 

This approach, minus the free probability is exactly what Boca \cite{Boca} uses to establish his unitary dilation result, in fact proving a more general statement about normal rational dilations.

%%%%%

Recall now, that the unital  universal free product of unital C$^*$-algebras $\cA_1,\dots, \cA_n$ is the universal C$^*$-algebra amalgamated over $\mathbb C$ generated by $\cA_1,\dots, \cA_n$ and is denoted ${\check *}_{i=1}^n \cA_i$. 
In particular, whenever one has a unital C$^*$-algebra $\cB$ and unital $*$-homomorphisms $\pi_i:\cA_i \rightarrow \cB$ 
then there exists a unital $*$-homomorphism $\pi:  {\check *}_{i=1}^n \cA_i \rightarrow \cB$.

Suppose there are unital completely positive maps $\theta_i : \cA_i \rightarrow \cB$ and states $\varphi_i\in S(\cA_i)$.
In \cite{Boca}, Boca proves that there exists a ucp map $*_{i=1}^n \theta_i = \theta_1 * \cdots *\theta_n : {\check *}_{i=1}^n \cA_i \rightarrow \cB$ such that $*_{i=1}^n \theta|_{\cA_i} = \theta_i$. This is defined on {\it reduced words} with respect to the $\varphi_i$. Namely, when $a_j \in \cA_{i_j}$ with $\varphi_{i_j}(a_j)=0$ and $i_j\neq i_{j-1}$ then
\[
*_{i=1}^n \theta(a_1\cdots a_m) = \theta_{i_1}(a_1)\cdots \theta_{i_m}(a_m).
\]
This completely determines $*_{i=1}^n \theta$ as (reduced words + $\mathbb C 1$) is dense in ${\check *}_{i=1}^n \cA_i$.

%%%%%
\begin{lemma}\label{thetaisahom}
Suppose $T_1,\dots, T_n\in B(\cH)$ and $V_1,\dots, V_n\in B(\cK)$ are contractions and $\theta_i : C^*(1,V_i) \rightarrow C^*(1,T_i)$ such that $p(V_i)  \mapsto p(T_i)$ are ucp maps ($p$ a polynomial). If $\psi_i\in S(C^*(1,V_i))$ then the free product ucp map with respect to the $\psi_i$, $*_{i=1}^n \theta_i$, is a homomorphism on the subalgebra $\overline\alg\{1,V_1,\dots, V_n\}$ of ${\check *}_{i=1}^n C^*(V_i)$.
\end{lemma}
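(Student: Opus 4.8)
The plan is to show that $*_{i=1}^n \theta_i$ is multiplicative on products of the generating set $\{1, V_1, \dots, V_n\}$ and then invoke density and continuity. Since $*_{i=1}^n\theta_i$ is ucp, it is automatically continuous and $*$-preserving, so it suffices to verify $\Theta(WW') = \Theta(W)\Theta(W')$ where $\Theta = *_{i=1}^n\theta_i$ and $W, W'$ are words in the $V_i$ (and their adjoints); multiplicativity on general elements of $\overline{\alg}\{1, V_1, \dots, V_n\}$ then follows by linearity, continuity, and the fact that a ucp map that is multiplicative on a dense subalgebra is a homomorphism. Actually, the cleanest route is: once $\Theta$ is multiplicative on the (dense, non-closed, non-self-adjoint) algebra $\alg\{1, V_1, \dots, V_n\}$, a standard consequence of the Choi–Effros / Stinespring picture (a ucp map whose multiplicative domain contains a generating set is a homomorphism on the generated $C^*$-algebra) upgrades this to all of $\overline{\alg}\{1,V_1,\dots,V_n\}$; alternatively one checks multiplicativity directly on the closure using norm-continuity of $\Theta$ and joint continuity of multiplication on bounded sets.

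The core computation is the multiplicativity of $\Theta$ on a product of two generators, say $\Theta(V_j V_k) = \theta_j(V_j)\theta_k(V_k) = T_j T_k$ — but this is only the single-letter case. The real content is products of arbitrary length: given a word $W = V_{i_1}^{\pm} V_{i_2}^{\pm} \cdots V_{i_m}^{\pm}$, I want $\Theta(W) = \theta_{i_1}(V_{i_1}^{\pm}) \cdots \theta_{i_m}(V_{i_m}^{\pm})$, with no $\psi_i$-centering required. The key observation is the following algebraic identity in a free product: for $a \in \cA_i$, writing $a = \mathring{a} + \psi_i(a) 1$ with $\mathring{a} := a - \psi_i(a)1$ a centered element, one can expand any product of $a_j \in \cA_{i_j}$ (with consecutive indices distinct) as a linear combination of reduced words (alternating centered letters) plus scalars, and track how $\Theta$ acts. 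One proves by induction on $m$ that for $a_j \in \cA_{i_j}$ with $i_j \neq i_{j-1}$,
\[
\Theta(a_1 a_2 \cdots a_m) = \theta_{i_1}(a_1) \theta_{i_2}(a_2) \cdots \theta_{i_m}(a_m);
\]
the base cases $m = 0, 1$ are the unitality and restriction properties of $\Theta$, and in the inductive step one writes $a_1 = \mathring{a_1} + \psi_{i_1}(a_1) 1$, so $a_1 a_2 \cdots a_m = \mathring{a_1} a_2 \cdots a_m + \psi_{i_1}(a_1) a_2 \cdots a_m$; the second summand is handled by the inductive hypothesis (and linearity of $\theta_{i_1}$), while for the first one repeats the centering on $a_2$, etc., peeling off scalars until one reaches a reduced word, on which $\Theta$ is defined letterwise by Boca's formula. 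Reassembling the telescoping sum and using linearity of each $\theta_{i_j}$ recovers $\theta_{i_1}(a_1)\cdots\theta_{i_m}(a_m)$. Applying this with $a_j = V_{i_j}$ or $a_j = V_{i_j}^*$ — which lie in $C^*(1, V_{i_j})$ — yields $\Theta(W) = \theta_{i_1}(V_{i_1}^\pm)\cdots \theta_{i_m}(V_{i_m}^\pm)$, and since $\theta_{i}$ is itself multiplicative on polynomials in $V_i, V_i^*$ only in a trivial sense (it need not be!), we do \emph{not} claim $\theta_{i_j}(V_{i_j}^k) = \theta_{i_j}(V_{i_j})^k$; rather, adjacent letters with the same index simply do not occur in a reduced word, and when they do occur in $W$ we first combine them \emph{inside} $C^*(1,V_{i_j})$ before applying $\Theta$, so consistency is automatic.

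Given this, multiplicativity of $\Theta$ on $\alg\{1, V_1, \dots, V_n\}$ is immediate: any two words concatenate to a word, and the letterwise formula composes correctly (again, merging same-index adjacent letters within a single $C^*(1, V_i)$ before applying $\theta_i$, which is consistent because $\theta_i$ is linear and the merging happens inside the domain algebra). Passing to the closure: $\Theta$ is norm-continuous (it is ucp, hence contractive), so if $x, y \in \overline{\alg}\{1, V_1, \dots, V_n\}$ are norm-limits of $x_k, y_k \in \alg\{1, V_1, \dots, V_n\}$, then $\Theta(x_k y_k) \to \Theta(xy)$ and $\Theta(x_k)\Theta(y_k) \to \Theta(x)\Theta(y)$ by continuity of multiplication on bounded sets, giving $\Theta(xy) = \Theta(x)\Theta(y)$. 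Hence $\Theta$ restricted to $\overline{\alg}\{1, V_1, \dots, V_n\}$ is a unital $*$-homomorphism.

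The main obstacle, and the step deserving the most care, is the inductive centering argument: one must be scrupulous that when a scalar $\psi_{i_1}(a_1)$ is peeled off, the \emph{new} leading pair $a_2 a_3$ may now have $i_2$ possibly still distinct from $i_3$ (it was, by hypothesis) so no illegal merging is forced, and that the bookkeeping of which terms are reduced words versus shorter products is handled by a clean double induction (on word length, and within that on the number of non-centered leading letters). A secondary subtlety is making sure that the formula for $\Theta$ on reduced words — Boca's defining relation — is genuinely well-defined and linear in each slot, which is exactly what \cite{Boca} provides; we use it as a black box. No contractivity or positivity of the $\theta_i$ beyond their being ucp is needed for the algebraic identity, only for the final continuity/closure step.
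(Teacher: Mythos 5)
Your core telescoping/centering induction for products $a_1\cdots a_m$ with \emph{alternating} indices is the same device the paper uses, but the proposal goes wrong in two places, and the second is fatal to the conclusion you state. First, at the merging step (adjacent letters from the same $C^*(1,V_i)$, both inside a single word and at the junction when you concatenate $W$ and $W'$) you explicitly decline to use that $\theta_i$ is multiplicative on polynomials in $V_i$, writing that the merging is ``consistent because $\theta_i$ is linear.'' Linearity is not enough: if the last letter of $W$ and the first letter of $W'$ both lie in $C^*(1,V_i)$, your letterwise formula gives $\theta_i(xy)$ inside $\Theta(WW')$ but $\theta_i(x)\theta_i(y)$ inside $\Theta(W)\Theta(W')$, and equating these is exactly a multiplicativity statement about $\theta_i$. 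The hypothesis $p(V_i)\mapsto p(T_i)$ does give you $\theta_i(V_i^jV_i^k)=T_i^{j+k}=\theta_i(V_i^j)\theta_i(V_i^k)$, i.e.\ $\theta_i$ \emph{is} a homomorphism on $\overline{\alg}\{1,V_i\}$ (adjoint-free polynomials), and this is precisely the ingredient the paper's induction uses in the same-index case; your disclaimer ``we do not claim $\theta_{i_j}(V_{i_j}^k)=\theta_{i_j}(V_{i_j})^k$'' discards the very fact you need.

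Second, you allow letters $V_{i_j}^{*}$ and conclude that $\Theta$ is a unital $*$-homomorphism. That is false, and no argument can repair it: the algebra $\overline{\alg}\{1,V_1,\dots,V_n\}$ in the lemma is the norm-closed \emph{non-self-adjoint} algebra generated by $1,V_1,\dots,V_n$, and the statement breaks as soon as adjoints enter. For instance, if $V_i$ is the minimal unitary dilation of a non-isometric contraction $T_i$ and $\theta_i=\ad P_{\cH}$ (the situation of Theorem \ref{freedilation}), then $\Theta(V_i^*V_i)=\theta_i(1)=1$ while $\Theta(V_i^*)\Theta(V_i)=T_i^*T_i\neq 1$, so multiplicativity fails already for a two-letter $*$-word; this also shows the generators do not lie in the multiplicative domain of $\Theta$, so the Choi--Effros/multiplicative-domain shortcut you invoke as an alternative does not apply (it would force $\Theta$ to be multiplicative on the full $C^*$-algebras $C^*(V_i)$, contradicting the same computation). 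The correct argument, as in the paper, runs the induction only over letters $b_j\in\overline{\alg}\{1,V_{i_j}\}$ (no adjoints), handles same-index neighbours by merging via the homomorphism property of $\theta_{i_j}$ on that adjoint-free algebra, and handles the alternating case by your telescoping expansion against Boca's formula \cite{Boca}; the continuity/closure step at the end is fine as you wrote it.
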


\begin{proof}
The result can be established by induction.
By definition each $\theta_i$ is already a homomorphism on $\overline{\alg}\{1,V_i\} \subseteq C^*(V_i)$.
Now for $m\geq 1$ assume that for all $1\leq k\leq m$ and for any $b_j \in \overline{\alg}\{1,V_{i_j}\}, 1\leq j\leq k$ we have that $*_{i=1}^n \theta_i\left(\prod_{j=1}^k b_j\right) = \prod_{j=1}^k \theta_{i_j}(b_j)$.

Suppose now we have $a_j\in \overline{\alg}\{1,V_{i_j}\}, 1\leq j\leq m+ 1$. If a pair of neighboring terms belongs to the same algebra, say $i_j = i_{j-1}$, by the inductive hypothesis and since $\theta_{i_j}$ is a homomorphism we have
\begin{align*}
*_{i=1}^n \theta_i(a_1\cdots a_{j-1}a_j \cdots a_{m+1}) &= \theta_{i_1}(a_1)\cdots \theta_{i_j}(a_{j-1}a_j)\cdots \theta_{i_{m+1}}(a_{m+1})
\\ &= \theta_{i_1}(a_1)\cdots \theta_{i_{j-1}}(a_{j-1})\theta_{i_j}(a_j)\cdots \theta_{i_{m+1}}(a_{m+1}).
\end{align*}
Otherwise assume that $i_{j-1} \neq i_j$ for $1< j\leq m+1$ and then we have
\begin{align*}
*_{i=1}^n \theta_i\left(\prod_{j=1}^{m+1} a_j\right) =& \ *_{i=1}^n \theta_i\left( \prod_{j=1}^{m+1} (a_j - \psi(a_j))\right) +
\\ & \ *_{i=1}^n \theta_i\left(\psi(a_1)\prod_{j=2}^{m+1} (a_j - \psi(a_j))\right) + 
\\ & \ *_{i=1}^n \theta_i\left(a_1\psi(a_2)\prod_{j=3}^{m+1} (a_j - \psi(a_j))\right) + \cdots +
\\ & \ *_{i=1}^n \theta_i\left(a_1\cdots a_m \psi(a_{m+1})\right)
\\ = & \ \prod_{j=1}^{m+1}\theta_{i_j}(a_j- \psi(a_j)) \ + 
\\ & \ \psi(a_1)\prod_{j=2}^{m+1}\theta_{i_j}(a_j - \psi(a_j)) \ + 
\\ & \ \theta_{i_1}(a_1)\psi(a_2)\prod_{j=3}^{m+1}\theta_{i_j}(a_j - \psi(a_j))\ + \cdots +
\\ & \ \theta_{i_1}(a_1)\cdots \theta_{i_m}(a_m)\psi(a_{m+1})
\\ = & \prod_{j=1}^{m+1} \theta_{i_j}(a_j)
\end{align*}
by the definition of $*_{i=1}^n \theta_i$ and the inductive hypothesis.
 
 The result follows as it is a simple matter now to show that $*_{i=1}^n \theta_i(ab) = *_{i=1}^n \theta_i(a)*_{i=1}^n \theta_i(b)$ for all $a,b\in \overline\alg\{1,V_1,\dots, V_n\}$.
\end{proof}

%%%%%%%%%%
\begin{theorem}\label{freedilation}
Let $T_1,\dots, T_n\in B(\cH)$ be freely independent contractions in the non-commutative probability space $(B(\cH), \varphi)$. Then there exists a Hilbert space $\cK$ containing $\cH$ and unitaries $U_1,\dots, U_n\in B(\cK)$ that are freely independent with respect to $\varphi\circ\ad P_\cH$ such that
\[
T_{i_1}^{k_1}\cdots T_{i_m}^{k_n} = P_\cH U_{i_1}^{k_1}\cdots U_{i_m}^{k_n}|_\cH, \ \ 1\leq i_j\leq n \ \textrm{and} \   k_j\in \mathbb N\cup \{0\}.
\]
Furthermore, this dilation is unique up to unitary equivalence when $\cK$ is minimal.
\end{theorem}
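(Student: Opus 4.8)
The plan is to construct the dilating unitaries via the free product of the single-variable Sz.-Nagy dilations, exactly mirroring the roadmap sketched before the lemma. First I would invoke the classical Sz.-Nagy dilation for each $T_i$ separately: there is a Hilbert space $\cK_i \supseteq \cH$ and a unitary $V_i \in B(\cK_i)$ with $T_i^k = P_\cH V_i^k|_\cH$ for all $k \geq 0$, and (after restricting to the minimal dilation) a ucp map $\theta_i : C^*(1,V_i) \to C^*(1,T_i)$ sending $p(V_i) \mapsto p(T_i)$, by the commutant lifting / Stinespring picture used in Paulsen. Next I would equip each $C^*(1,V_i)$ with the pullback state $\psi_i := \varphi\circ\ad_{P_\cH}$ restricted there (equivalently $\psi_i(x) = \langle x P_\cH\xi, P_\cH\xi\rangle$-type data coming from $\varphi$), and form the Boca free product ucp map $\theta := *_{i=1}^n\theta_i : {\check *}_{i=1}^n C^*(1,V_i) \to B(\cH)$ with respect to the $\psi_i$.

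The heart of the argument is then to apply Lemma \ref{thetaisahom}: $\theta$ restricts to a unital $*$-homomorphism on the subalgebra $\cD := \overline{\alg}\{1,V_1,\dots,V_n\}$. I would take the Stinespring dilation of $\theta$ (as a ucp map on the C$^*$-algebra generated by the $V_i$), obtaining a Hilbert space $\cK \supseteq \cH$ and a $*$-representation $\pi$ with $\theta(x) = P_\cH\pi(x)|_\cH$; set $U_i := \pi(V_i)$, which are unitaries since the $V_i$ are. Because $\theta$ is multiplicative on $\cD$, for any word $w = V_{i_1}^{k_1}\cdots V_{i_m}^{k_m}$ with $k_j \geq 0$ we get $P_\cH U_{i_1}^{k_1}\cdots U_{i_m}^{k_m}|_\cH = P_\cH\pi(w)|_\cH = \theta(w) = \theta_{i_1}(V_{i_1}^{k_1})\cdots\theta_{i_m}(V_{i_m}^{k_m}) = T_{i_1}^{k_1}\cdots T_{i_m}^{k_m}$, which is precisely the asserted joint power dilation formula. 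For freeness: a reduced word $a_1\cdots a_m$ in the $U_i$ with $a_j \in C^*(1,U_{i_j})$, $i_j \neq i_{j-1}$, and $(\varphi\circ\ad_{P_\cH})(a_j) = 0$ should satisfy $(\varphi\circ\ad_{P_\cH})(a_1\cdots a_m) = 0$; here I would need that $\varphi\circ\ad_{P_\cH}$ applied to $\pi(\,\cdot\,)$ agrees with the state $*_{i=1}^n\psi_i$ on the free product (this is where the definition of Boca's free product ucp map, which is built reduced-word-by-reduced-word with respect to the $\psi_i$, pays off), and then the vanishing of alternating centered products is immediate from the definition of the free product state together with the fact that $\theta_i$ intertwines $\psi_i$ with $\varphi\circ\ad_{P_\cH}$ on $C^*(1,T_i)$ — which in turn follows from $\varphi$ making $T_1,\dots,T_n$ free.

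The main obstacle I expect is the bookkeeping around states: one must check that the state $\varphi\circ\ad_{P_\cH}$ on $B(\cH)$ pulls back correctly along $\theta$ to the free product state $*\psi_i$, i.e. that $\varphi\circ\ad_{P_\cH}\circ\theta = *_{i=1}^n\psi_i$, so that freeness of the $U_i$ with respect to $\varphi\circ\ad_{P_\cH}$ genuinely translates to freeness with respect to $*\psi_i$ of the generators inside the free product C$^*$-algebra. This requires knowing that each $\theta_i$ is state-preserving, $\psi_i = (\varphi\circ\ad_{P_\cH})\circ\theta_i$, which is a compatibility one has to arrange when choosing the single-variable dilations and the state $\psi_i$; once that is in place the multiplicativity on reduced words (Lemma \ref{thetaisahom}) plus the reduced-word formula for $*\theta_i$ forces the identity on a dense set. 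Finally, minimality and uniqueness up to unitary equivalence I would deduce exactly as in Theorem \ref{sznagyfoias}: cut down $\cK$ to the smallest $\{U_i\}$-reducing subspace containing $\cH$, which corresponds to the minimal Stinespring representation of $\theta$, and invoke its uniqueness.
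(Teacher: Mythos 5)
Your proposal follows essentially the same route as the paper's proof: single-variable minimal unitary dilations $V_i$, the compression ucp maps $\theta_i$ with states $\psi_i=\varphi\circ\theta_i$, Boca's free product ucp map, Lemma \ref{thetaisahom} plus Stinespring to get the joint power-dilation formula for $U_i=\pi(V_i)$, and freeness by pulling centered elements of $C^*(1,U_{i_j})$ back to $\psi_{i_j}$-centered elements $b_j\in C^*(V_{i_j})$ and combining the reduced-word formula for $*_{i=1}^n\theta_i$ with the assumed freeness of $T_1,\dots,T_n$. The one divergence is the uniqueness step, which does not follow quite so directly from minimal-Stinespring uniqueness as you suggest (one must first know that compressions of words involving negative powers of the $U_i$ are forced by the stated dilation formula, which is the Durszt--Sz.-Nagy remark the paper imports via Boca's Section 4), but that is a citation-level point rather than a gap in the main construction.
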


\begin{proof}
For each $1\leq i\leq n$, let $V_i\in B(\cK_i)$ with $\cH\subset \cK_i$ be the minimal unitary dilation of $T_i$ and $\theta_i: C^*(V_i) \rightarrow C^*(1,T_i)$ given by $\theta_i = \ad P_\cH$, a unital completely positive map. Thus, $\psi_i := \varphi\circ\theta_i$ is a state on $C^*(V_i)$ such that $\psi_i(V_i^n) = \varphi(T_i^n), \forall n\geq 0$.

Consider now, the free product ucp map $*_{i=1}^n \theta_i : \check{*}_{i=1}^n C^*(V_i) \rightarrow C^*(1,T_1,\dots, T_n)$ relative to the states $\psi_i$.
Let $(\pi, \cK)$ be the Stinespring representation of $*_{i=1}^n \theta_i$ with $\cH\subset \cK$ which gives $*_{i=1}^n \theta_i(a) = P_\cH\pi(a)|_\cH$. Define unitaries $U_i := \pi(V_i)$ and note that for $m\geq 1$ and $k_i \geq 0$ then
\begin{align*}
P_\cH U_{i_1}^{k_1}\cdots U_{i_m}^{k_m}|_\cH &= P_\cH \pi(V_{i_1}^{k_1}\cdots V_{i_m}^{k_m})|_\cH
\\ &= *_{i=1}^n \theta_i(V_{i_1}^{k_1}\cdots V_{i_m}^{k_m})
\\ &= T_{i_1}^{k_1}\cdots T_{i_m}^{k_m} & \textrm{by Lemma \ref{thetaisahom}.}
\end{align*}
Furthermore, for $a_j\in C^*(U_{i_j})$ with $\varphi\circ\ad P_\cH(a_j) = 0$ and $i_j \neq i_{j-1}$ we have that $\exists b_j\in C^*(V_{i_j})$ such that $\pi(b_j) = a_j$, $\theta_{i_j}(b_j) \in C^*(1,T_{i_j})$ and
\[
0 = \varphi\circ\ad P_\cH(a_j) = \varphi\circ\ad P_\cH\circ \pi(b_j) = \varphi(\theta_{i_j}(b_j)).
\]
Hence,
\begin{align*}
\varphi\circ\ad P_\cH(a_1\cdots a_m) &= \varphi\circ\ad P_\cH\circ\pi(b_1\cdots b_m) \\
&= \varphi\circ *_{i=1}^n \theta_i(b_1\cdots b_m)\\ 
&= \varphi(\theta_{i_1}(b_1)\cdots\theta_{i_m}(b_m))\\
&= 0.
\end{align*}
Therefore, $U_1,\dots, U_n$ are $*$-free with respect to $\varphi\circ\ad P_\cH$.

An argument to show that the minimal unitary dilation is unique is given in \cite[Section 4]{Boca} following from a classic remark of Durszt and Sz.-Nagy \cite{DursztSzNagy}. In particular, given two minimal unitary dilations $U_1,\dots, U_n$ and $U_1',\dots, U_n'$ on $\cK$ and $\cK'$ respectively, there is a unitary $\Theta: \cK \rightarrow \cK'$ fixing the subspace $\cH$ such that $\Theta U_i = U_i'\Theta$. From this it is easy to see that the states $\varphi\circ\ad P_\cH$ are equal by way of $\Theta$ since a
 state is completely determined by the values 
 \[
 \varphi\circ\ad P_\cH(U_i^k) = \varphi(T_i^k) = \varphi\circ\ad P_\cH((U_i)')^k), \ \forall k\in\mathbb N
 \] because $U_1,\dots, U_n$ and $U_1',\dots, U_n'$ are collections of $*$-free unitaries \cite[Lemma 5.13]{NicaSpeicher}. 
\end{proof}

\begin{remark}
It should be noted, by \cite[Proposition 2.5.3]{VoicDykeNica}, $\varphi\circ\ad P_\cH$ is a tracial state on $C^*(U_1,\dots, U_n)$ since $U_1,\dots, U_n$ are $*$-free and $\varphi\circ\ad P_\cH$ is a trace on each $C^*(U_i)$ (the algebra is commutative).
\end{remark}

One of the purposes of doing the previous arguments carefully is that we can now say what happens when the state $\varphi$ is faithful.

%%%%%
\begin{lemma}\label{faithfuldilation}
Let $T\in B(\cH)$ be a contraction and $\varphi$ be a faithful state on $C^*(1,T)$. If $U\in B(\cK)$ with $\cH \subset \cK$ is the minimal unitary dilation of $T$ then $\varphi\circ\ad P_\cH$ is a faithful state on $C^*(U)$.
\end{lemma}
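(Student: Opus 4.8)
The plan is to reduce everything to two ingredients: that $\ad P_\cH$ actually carries $C^*(U)$ into $C^*(1,T)$ (so that $\varphi\circ\ad P_\cH$ is genuinely a state there), and that this compression is \emph{faithful} on positive elements; faithfulness of $\varphi$ then finishes the argument.

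First I would check the domain/codomain issue. Since $U$ is unitary, $C^*(U)$ is the closed linear span of the powers $U^k$, $k\in\mathbb Z$, and because $P_\cH$ is self-adjoint we have $P_\cH U^k|_\cH = T^k$ for $k\ge 0$ and $P_\cH U^k|_\cH = (P_\cH U^{-k}|_\cH)^* = (T^{-k})^*$ for $k<0$. Hence $\ad P_\cH$ sends each $U^k$ into $C^*(1,T)$, and by linearity and norm-continuity it sends all of $C^*(U)$ into $C^*(1,T)$. So $\varphi\circ\ad P_\cH$ is a well-defined state on $C^*(U)$.

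The heart of the matter is faithfulness of the compression. I would take $a\in C^*(U)$ positive with $\ad P_\cH(a) = P_\cH a P_\cH = 0$ and write $a = b^*b$ with $b\in C^*(U)$; then $0 = P_\cH b^*b P_\cH = (bP_\cH)^*(bP_\cH)$ forces $bP_\cH = 0$, i.e. $\cH\subseteq\ker b$. Now $C^*(U)$ is abelian, being singly generated by the unitary $U$, so $b$ commutes with every element of $C^*(U)$; in particular $\ker b$ is a closed reducing subspace for $U$ containing $\cH$, and since $U$ is the \emph{minimal} unitary dilation, $\ker b = \cK$, whence $b = 0$ and $a = 0$.

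Finally I would assemble the pieces: if $a\in C^*(U)$ is positive with $(\varphi\circ\ad P_\cH)(a)=0$, then $\ad P_\cH(a)$ is a positive element of $C^*(1,T)$ annihilated by the faithful state $\varphi$, so $\ad P_\cH(a)=0$, and by the previous step $a=0$; thus $\varphi\circ\ad P_\cH$ is faithful on $C^*(U)$. The only genuinely load-bearing point is the use of minimality in the reducing-subspace step — without it $\ker b$ could be a proper reducing subspace above $\cH$ and the statement would fail — everything else being routine.
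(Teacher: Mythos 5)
Your proof is correct, and it takes a genuinely different and more elementary route than the paper. The paper's argument passes through the GNS representation of $(C^*(U),\varphi\circ\ad P_\cH)$, uses faithfulness of $\varphi$ to show $\ker\pi\subset\ker\ad P_\cH$, builds a ucp map $\theta$ on $C^*(\pi(U))$, takes its minimal Stinespring representation to manufacture a second minimal unitary dilation $V$ of $T$, and finally invokes uniqueness of minimal unitary dilations (a unitary $W$ fixing $\cH$ with $WUW^*=V$) to transport faithfulness back to $C^*(U)$. You instead prove the sharper statement that the compression $\ad P_\cH\colon C^*(U)\to C^*(1,T)$ is itself faithful, with no reference to $\varphi$ at all: writing a positive element as $b^*b$, the identity $P_\cH b^*bP_\cH=(bP_\cH)^*(bP_\cH)=0$ gives $\cH\subseteq\ker b$, and since $C^*(U)$ is commutative, $\ker b$ is a closed subspace reducing $U$ and containing $\cH$, hence equal to $\cK$ by minimality, so $b=0$; composing with the faithful state $\varphi$ then finishes. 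All the steps check out, including the preliminary observation that $\ad P_\cH$ maps $C^*(U)=\overline{\spn}\{U^k:k\in\mathbb Z\}$ into $C^*(1,T)$. Your approach avoids the GNS/Stinespring/uniqueness machinery entirely and isolates exactly where minimality enters; its one structural limitation is that it leans on the commutativity of $C^*(U)$ (so it is tied to a single unitary generator), whereas the paper's representation-theoretic argument does not use commutativity — though since the lemma is only ever applied to a single minimal unitary dilation, this costs nothing here.
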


\begin{proof}
Let $(\pi, \cK', \xi)$ be the GNS representation of $(C^*(U), \varphi\circ\ad P_\cH)$. Then $\pi(U)$ is still a unitary, $\varphi\circ \ad P_\cH(a) = \langle \pi(a)\xi,\xi\rangle$ and $\langle \cdot \xi,\xi\rangle$ is a faithful state on $\pi(C^*(U)) = C^*(\pi(U))$.

Suppose $a$ is a positive element in $\ker \pi$. Then $0 = \langle \pi(a)\xi,\xi\rangle = \varphi\circ\ad P_\cH(a)$ which implies that $\ad P_\cH(a) = 0$ since compression to $\cH$ is a completely positive map and $\varphi$ is faithful. Since $\ker \pi$ is a $C^*$-algebra (so every element of $\ker \pi$ can be written as a linear combination of at most four positive elements from $\ker \pi$), we can conclude that $\ker \pi \subset \ker \ad P_\cH$. Thus there exists a well-defined ucp map $\theta : C^*(\pi(U)) \rightarrow C^*(1,T)$ given by sending $\pi(a) \mapsto P_\cH a|_\cH$. Notably, we have $\theta(\pi(U)^n) = T^n, n\geq 0$.

Let $(\tilde\pi, \cK'')$ with $\cH\subset \cK''$ be the minimal Stinespring representation of $\theta$. That is, $\tilde\pi : C^*(\pi(U)) \rightarrow B(\cK'')$ is a $*$-homomorphism (in fact a $*$-isomorphism) such that $\theta(a) = P_\cH\tilde\pi(a)|_\cH$ and $\cK''$ is the closed linear span of $\tilde\pi(C^*(\pi(U)))\cH$ by minimality.
Define $V := \tilde\pi(\pi(U))$ a unitary and note that $P_\cH V^n|_\cH = \theta(\pi(U^n)) = T^n$. Thus, because of this and  the minimality of the Stinespring representation we have that $V$ is a minimal unitary dilation of $T$. 

Consider now the state $\psi := \varphi\circ\ad P_\cH$ on $C^*(V)$. Now
\begin{align*}
\psi\circ\tilde\pi\left(\sum_{i=-n}^n \alpha_i \pi(U)^n\right) &= \psi\left( \sum_{i=-n}^n \alpha_i V^n\right)
\\ &= \varphi\left(\sum_{i=-n}^n \alpha_i T(n)\right)
\\ &= \varphi\circ\ad P_\cH\left(\sum_{i=-n}^n \alpha_i U^n\right)
\\ &= \left\langle \left(\sum_{i=-n}^n \alpha_i \pi(U)^n\right) \xi, \xi\right\rangle.
\end{align*}
Hence, $\psi\circ\tilde\pi(\cdot) = \langle \cdot \xi,\xi\rangle$ is a faithful state on $C^*(\pi(U))$ and so $\psi$ is a faithful state on $C^*(V)$.

By minimality of the dilations there exists a unitary $W: \cK \rightarrow \cK''$ such that $Wh = h$ for all $h\in \cH$ and $WUW^* = V$. This implies that $\varphi\circ\ad P_\cH$ on $C^*(U)$ is equal to $\psi\circ \ad W$ which is faithful. Therefore, $\varphi\circ\ad P_\cH$ was a faithful state on $C^*(U)$ all along.
\end{proof}

One last ingredient before presenting Theorem \ref{freedilationfaithful} is the reduced free product of C$^*$-probability spaces $(\cA_i, \varphi_i)$, denoted $(\cA, \varphi)$ with $\cA = *_{i=1}^n \cA_i$. As mentioned in the introduction this was introduced by both Avitzour \cite{Avitzour} and Voiculescu \cite{Voiculescu} in the 1980's.

In correspondence with Boca's result on free products of ucp maps there is the reduced free product of ucp maps given by Choda-Blanchard-Dykema \cite{Choda, BlanchardDykema} and the operator-valued conditionally free product of M\l{}otkowski \cite{Mlot} based on the conditionally free product of Bo{\.z}ejko, Leinert and Speicher \cite{BLS}. 

Now we can present the following theorem.

%Faithful version
\begin{theorem}\label{freedilationfaithful}
Let $T_1,\dots, T_n\in B(\cH)$ be freely independent contractions in the non-commutative probability space $(B(\cH), \varphi)$ with $\varphi$ faithful. If $U_1,\dots, U_n\in B(\cK)$ is the minimal free unitary dilation then $\varphi\circ\ad P_\cH$ is faithful.  In particular, when $\varphi$ is faithful, the minimal free unitary dilation of $T_1,\dots, T_n, \varphi$ arises from the reduced free product of their minimal unitary dilations.
\end{theorem}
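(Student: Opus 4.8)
The plan is to exhibit the reduced free product of the individual minimal unitary dilations as a concrete minimal free unitary dilation that carries a faithful state, and then to transfer this conclusion to an arbitrary minimal free unitary dilation using the uniqueness already recorded in Theorem~\ref{freedilation}.

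First I would fix the building blocks. For each $i$ let $V_i$ be the minimal unitary dilation of $T_i$ and put $\psi_i:=\varphi\circ\ad P_\cH$ on $C^*(V_i)$. Since $\varphi$ is faithful on $C^*(1,T_1,\dots,T_n)$ it is faithful on each subalgebra $C^*(1,T_i)$, so Lemma~\ref{faithfuldilation} shows every $\psi_i$ is a faithful state on $C^*(V_i)$. The key input is then the theorem of Dykema that a reduced free product of faithful states is faithful \cite{Dykema}: applied to $(\cA,\tau):=*_{i=1}^n(C^*(V_i),\psi_i)$ it gives that the canonical free product state $\tau$ is faithful on $\cA$. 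Write $v_i\in\cA$ for the image of $V_i$; the $v_i$ are $*$-free unitaries with $\tau(v_i^k)=\varphi(T_i^k)$ for $k\ge 0$.

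Next I would carry $T_1,\dots,T_n$ back from $\cA$ by assembling the compressions $\theta_i:=\ad P_\cH\colon C^*(V_i)\to C^*(1,T_i)$ --- which are state-preserving from $(C^*(V_i),\psi_i)$ to $(C^*(1,T_i),\varphi)$ --- into one ucp map $\Theta:=*_{i=1}^n\theta_i\colon\cA\to C^*(1,T_1,\dots,T_n)$ via the reduced free product of ucp maps of Choda and Blanchard--Dykema \cite{Choda,BlanchardDykema} (here one uses that, $\varphi$ being faithful and the $T_i$ being $*$-free, $C^*(1,T_1,\dots,T_n)$ is itself the reduced free product $*_{i=1}^n(C^*(1,T_i),\varphi)$). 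Then $\varphi\circ\Theta=\tau$, and running the induction of Lemma~\ref{thetaisahom} inside $\cA$ shows $\Theta$ is multiplicative on the norm-closed algebra generated by $1,v_1,\dots,v_n$, whence $\Theta(v_{i_1}^{k_1}\cdots v_{i_m}^{k_m})=T_{i_1}^{k_1}\cdots T_{i_m}^{k_m}$ for all $k_j\ge 0$. Taking the minimal Stinespring representation $(\rho,\cL)$ of $\Theta$ with $\cH\subseteq\cL$, so $\Theta=\ad P_\cH\circ\rho$ and $\cL=\overline{\rho(\cA)\cH}$, and setting $W_i:=\rho(v_i)$, one argues exactly as in the proof of Theorem~\ref{freedilation} that the $W_i$ are unitaries with $P_\cH W_{i_1}^{k_1}\cdots W_{i_m}^{k_m}|_\cH=T_{i_1}^{k_1}\cdots T_{i_m}^{k_m}$; since $\varphi\circ\ad P_\cH\circ\rho=\tau$ is faithful, $\ker\rho$ contains no nonzero positive element, so $\rho$ is injective, $C^*(W_i)=\rho(C^*(v_i))\cong C^*(V_i)$, and lifting centered alternating words through $\rho^{-1}$ and invoking freeness in $(\cA,\tau)$ shows the $W_i$ are $*$-free with respect to $\varphi\circ\ad P_\cH$. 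As $\cL=\overline{C^*(W_1,\dots,W_n)\cH}$, this $(W_1,\dots,W_n)$ is a minimal free unitary dilation of $(T_1,\dots,T_n)$, and because $\rho$ is injective its state $\varphi\circ\ad P_\cH$ on $C^*(W_1,\dots,W_n)\cong\cA$ corresponds to the faithful state $\tau$.

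Finally, by the uniqueness clause of Theorem~\ref{freedilation} any minimal free unitary dilation $U_1,\dots,U_n$ of $(T_1,\dots,T_n)$ is unitarily equivalent to $(W_1,\dots,W_n)$ via a unitary fixing $\cH$; this equivalence carries $\varphi\circ\ad P_\cH$ on $C^*(U_1,\dots,U_n)$ to the faithful state on $C^*(W_1,\dots,W_n)$, so $\varphi\circ\ad P_\cH$ is faithful, and it identifies $C^*(U_1,\dots,U_n)$ with $\cA=*_{i=1}^n(C^*(V_i),\psi_i)$, which is exactly the asserted ``in particular'' statement. I expect the one genuinely external ingredient --- Dykema's faithfulness theorem for reduced free products, used to get that $\tau$ is faithful --- to be the crux of the argument; once that is granted, the remainder is bookkeeping with reduced free products of ucp maps, Stinespring dilations, and the uniqueness already proved.
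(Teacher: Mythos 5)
Your proposal is correct and follows essentially the same route as the paper: faithfulness of each $\varphi\circ\ad P_\cH$ on $C^*(V_i)$ via Lemma~\ref{faithfuldilation}, faithfulness of the reduced free product state (the paper cites \cite[Theorem 7.9]{NicaSpeicher} for the fact you attribute to Dykema), the Choda--Blanchard--Dykema reduced free product of the compressions together with Lemma~\ref{thetaisahom}, the Dykema--R{\o}rdam identification of $C^*(1,T_1,\dots,T_n)$ with the reduced free product, and injectivity of the Stinespring representation to identify the dilation algebra with $*_{i=1}^n(C^*(V_i),\varphi\circ\ad P_\cH)$. Your explicit final transfer to an arbitrary minimal dilation via the uniqueness clause of Theorem~\ref{freedilation} is only a slightly more spelled-out version of what the paper does implicitly.
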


\begin{proof}
Because of the faithfulness of $\varphi$, $C^*(1,T_1,\dots, T_n)$ is in fact $*$-isomorphic to $*_{i=1}^n (C^*(1,T_i), \varphi)$ \cite[Lemma 1.3]{DykemaRordam}. Let $V_i\in B(\cK_i)$ be the minimal unitary dilation of $T_i$ and again let $\theta_i:C^*(V_i) \rightarrow C^*(1,T_i)$ be the ucp map given by $\theta_i(V_i^k) = T_i^k$. By Lemma \ref{faithfuldilation} $\varphi\circ\theta_i$ is a faithful state.

As mentioned, Choda \cite{Choda} showed that there is a reduced free product of ucp maps, though with a gap in the proof that was filled by Blanchard-Dykema \cite{BlanchardDykema}.
Thus, there exists a ucp map 
\[
\theta : (*_{i=1}^n C^*(V_i), *_{i=1}^n \varphi\circ\theta_i) \rightarrow (*_{i=1}^n C^*(1,T_i), \varphi)
\]
extending each of the $\theta_i$. By \cite[Theorem 7.9]{NicaSpeicher} $*_{i=1}^n \varphi\circ\theta_i = \varphi\circ\theta$ is a faithful state. Lemma \ref{thetaisahom} applies equally well in the reduced free product case to give that $\theta$ acts homomorphically on $\overline{\alg}\{1,V_1,\dots,V_n\}$.

As in Theorem \ref{freedilation}, let $(\pi, \cK)$ be the minimal Stinespring dilation of $\theta$ and $U_i = \pi(V_i)$. Then, $U_1,\dots, U_n, \varphi\circ\ad P_\cH$ is the minimal free unitary dilation of $T_1,\dots, T_n, \varphi$ with $\varphi\circ\ad P_\cH$ faithful.

In fact, we can say a little more. Because $\varphi\circ\theta$ is faithful then $\pi$, the Stinespring representation of $\theta$, has to be faithful, i.e. injective. Hence, again by \cite[Lemma 1.3]{DykemaRordam} $(C^*(U_1,\dots, U_n),\varphi\circ\ad P_\cH) \simeq *_{i=1}^n (C^*(V_i), \varphi\circ\theta_i)$. Therefore, the minimal free unitary dilation of $T_1,\dots, T_n, \varphi$ when $\varphi$ is faithful arises from the reduced free product of their minimal unitary dilations.
\end{proof}

% The bibliography

\end{document}